\DeclareMathOperator{\erf}{erf}
\newtheorem{lemma}{Lemma}
\newtheorem{theorem}{Theorem}
\title{A non-parametric ensemble transform method for Bayesian inference\thanks{Universit\"at Potsdam, 
Institut f\"ur Mathematik, Am Neuen Palais 10, D-14469 Potsdam, Germany}}
\author{Sebastian Reich}
\begin{document}

\maketitle

\begin{abstract} Many applications, such as intermittent data
  assimilation, lead to a  recursive application of Bayesian inference
  within a Monte Carlo context. Popular data assimilation algorithms include
  sequential Monte Carlo methods and ensemble Kalman filters (EnKFs). These
  methods differ in the way Bayesian inference is
  implemented. Sequential Monte Carlo methods rely on importance
  sampling combined with a resampling step while EnKFs
  utilize a linear transformation of Monte Carlo
  samples based on the classic Kalman filter. While EnKFs have proven to be quite robust even for small
  ensemble sizes, they are not consistent since their derivation
  relies on a linear regression {\it ansatz}.  In this paper, we propose another transform
  method, which  does not rely on any a prior assumptions on the
  underlying prior and posterior distributions. The new method is
  based on solving an optimal transportation problem for
  discrete random variables. 
\end{abstract}

\noindent
{\bf Keywords.} Bayesian inference, Monte Carlo method, sequential data assimilation, linear programming, resampling.

\noindent {\bf AMS(MOS) subject classifications.} 65C05, 62M20, 93E11, 62F15, 86A22


\section{Introduction}

This paper is concerned with a particular implementation of Monte
Carlo methods for Bayesian inference and its application to filtering
and intermittent data assimilation \citep{sr:jazwinski}. More specifically, we consider
the problem of estimating posterior expectation values
under the assumption that a finite-size ensemble $\{x_i^f\}_{i=1}^M$
from the (generally unknown) prior distribution $\pi_{X^f}$ is available. A standard approach for obtaining such
estimators relies on the idea of importance sampling based on the
likelihood $\pi_Y(y_0 | x_i^f)$ of the samples $x_i^f$ with regard to a given observation
$y_0$ \citep{sr:Doucet,sr:arul02,sr:crisan}. If applied recursively,
it is necessary to combine importance sampling with a resampling step
such as monomial or systematic resampling \citep{sr:arul02,sr:kuensch05}. More recently the
ensemble Kalman filter (EnKF) has been introduced \citep{sr:evensen}, which transforms the
prior ensemble $\{x_i^f\}_{i=1}^M$ into an uniformly weighted posterior ensemble
$\{x_i^a\}_{i=1}^M$ using the classic Kalman update step of linear
filtering \citep{sr:jazwinski}. The EnKF leads, however, to a
biased estimator even in the limit $M\to \infty$
\citep{sr:lei11}. In this paper, we propose a non-random ensemble
transform method which is based on finite-dimensional optimal
transportation in form of linear programming
\citep{sr:strang,sr:cotterreich}. We provide numerical and theoretical
evidence that the new ensemble transform method
leads to consistent posterior estimators. The new transform method can be
applied to intermittent data assimilation leading to a novel
implementation of particle filters. We demonstrate this possibility
for the chaotic Lorenz-63 model \citep{sr:lorenz63}.

An outline of the paper is as follows. In Section \ref{sec2}, importance
sampling Monte Carlo is summarized in the context of Bayesian
inference. Subsequently importance sampling is put into the context
of linear programming in Section \ref{sec3}. This leads to a novel resampling
method which maximizes the correlation between the prior and posterior
ensemble members. We propose a further modification which turns the
resampling step into a deterministic and linear
transformation. Convergence of the proposed transformation step is
demonstrated numerically by means of two examples. A theoretical
convergence result is formulated based on results by
\cite{sr:mccann95}. Finally, the application to
sequential Monte Carlo methods is discussed in Section \ref{sec4} and
a novel ensemble transform filter is proposed. Numerical results are
presented for the Lorenz-63 model.

\section{Bayesian inference and importance sampling} \label{sec2}

We summarize the importance sampling approach to Bayesian
inference. Given a prior (or in the context of dynamic models,
forecasted) random variable $X^f:\Omega \to \mathbb{R}^N$, we denote
its probability density function (PDF)
by $\pi_{X^f}(x)$, $x\in \mathbb{R}^N$, and consider the assimilation
of an observed $y_0 \in \mathbb{R}^K$ with
likelihood function $\pi_Y(y|x)$. According to Bayes' theorem the analyzed, posterior PDF is given by
\begin{equation} \label{Bayes}
\pi_{X^a}(x|y_0) = \frac{\pi_Y(y_0|x)\pi_{X^f}(x)}{\int_{\mathbb{R}^N}
  \pi_Y(y_0|x)\pi_{X^f}(x)dx}.
\end{equation}

Typically, the forecasted random variable $X^f$ and its PDF are not
available explicitly. Instead one  assume that an ensemble of
forecasts $x_i^f \in \mathbb{R}^N$, $i=1,\ldots,M$, is given, which
mathematically are considered as realizations $X_i^f(\omega)$, $\omega
\in \Omega$, of $M$ independent (or dependent) random variables $X_i^f:\Omega
\to \mathbb{R}^N$ with law $\pi_{X^f}$. Then the expectation value $\bar g^f = \mathbb{E}_{X^f}[g]$ of
a function $g:\mathbb{R}^N \to \mathbb{R}$ with respect to the prior
PDF $\pi_{X^f}(x)$ can be estimated according to
\[
\bar G_M^f = \frac{1}{M} \sum_{i=1}^M g(X_i^f)
\]
with realization
\[
\bar g_M^f = \bar G_M^f(\omega) = \frac{1}{M}\sum_{i=1}^M g(X_i^f(\omega)) = \frac{1}{M}\sum_{i=1}^M g(x_i^f)
\]
for the ensemble $\{x_i^f = X_i^f(\omega)\}_{i=1}^M$.
The estimator is unbiased for any $M>0$ and its variance vanishes as
$M\to \infty$ provided the variance of $g$ is finite.

Following the idea of importance sampling \citep{sr:Liu}, one obtains
the following estimator with respect to the posterior PDF
$\pi_{X^a}(x|y_0)$ using the forecast ensemble:
\[
\bar g_M^a = \sum_{i=1}^M w_i g(x_i^f),
\]
with weights
\begin{equation} \label{weights}
w_i =
\frac{\pi_Y(y_0|x_i^f)}{\sum_{i=1}^M \pi_Y(y_0|x_i^f)}.
\end{equation}
The estimator is no longer unbiased for finite $M$ but remains consistent.
Here an estimator is called consistent if the root mean square error
between the estimator $\bar g_M^a$ and the
exact expectation value $\bar g^a = \mathbb{E}_{X^a}[g]$ vanishes as $M\to \infty$.


\section{An ensemble transform method based on linear programming} \label{sec3}

Alternatively to importance sampling, we may attempt to transform the samples $x_i^f =
X_i^f(\omega)$ with $X_i^f \sim \pi_{X^f}$ into samples $\hat x_i^a$ which follow
the posterior distribution $\pi_{X^a}(x|y_0)$. Then we are back to an estimator
\[
\bar g_M^a = \frac{1}{M} \sum_{i=1}^M g(\hat x_i^a)
\]
with equal weights for posterior expectation values. For univariate
random variables $X^f$ and $X^a$ with PDFs $\pi_{X^f}$ and
$\pi_{X^a}$, respectively, the transformation is characterized by
\begin{equation} \label{cumm}
F_{X^a}(\hat x^a_i) = F_{X^f}(x_i^f),
\end{equation}
where $F_{X^f}$ and $F_{X^a}$ denote the cumulative distribution functions of $X^f$
and $X^a$, respectively, \emph{e.g.}
\[
F_{X^f}(x) = \int_{-\infty}^x \pi_{X^f}(x')dx'.
\]
Eq.~(\ref{cumm}) requires knowledge of the associated PDFs and its
extension to multivariate random variables is non-trivial. In this
section, we propose an alternative approach that does not require
explicit knowledge of the underlying PDFs and that easily generalizes
to multivariate random variables. To obtain
the desired transformation we utilize the idea of optimal
transportation \citep{sr:Villani,sr:Villani2} with respect to an appropriate distance $d(x,x')$ in
$\mathbb{R}^N$. More precisely, we first seek a coupling between two
discrete random variables ${\rm X}^f: \Omega' \to {\cal X}$ and ${\rm X}^a:\Omega' \to
{\cal X}$ with realizations in ${\cal X} = \{x_1^f,\ldots,x_M^f\}$ and
probability vector $p^f = (1/M,\ldots,1/M)^T$ for ${\rm X}^f$ and $p^a
= (w_1,\ldots,w_M)^T$ for ${\rm X}^a$, 
respectively. A coupling between ${\rm X}^f $ and ${\rm X}^a$ is an
$M\times M$ matrix ${\bf T}$ with non-negative
entries $t_{ij} = ({\bf T})_{ij}\ge 0$ such that
\begin{equation} \label{marginals}
\sum_{i=1}^M t_{ij} = 1/M \qquad \sum_{j=1}^M t_{ij} = w_i .
\end{equation}
We now seek the coupling ${\bf T}^\ast$ that minimizes the expected distance
\begin{equation} \label{OT}
\mathbb{E}_{{\rm X}^f{\rm X}^a}[d(x^f,x^a)] = \sum_{i,j=1}^M t_{ij} d(x_i^f,x_j^f) .
\end{equation}
The desired coupling ${\bf T}^\ast$ is characterized by a linear programming
problem \citep{sr:strang}. Since (\ref{marginals})
leads to $2M-1$ independent constraints the matrix ${\bf T}^\ast$ contains
at most $2M-1$ non-zero entries.  

In this paper, we use the squared Euclidean distance, \emph{i.e.}
\begin{equation} \label{dist}
d(x_i^f,x_j^f) = \|x_i^f-x_j^f\|^2.
\end{equation}
We recall that minimizing the expected distance with respect to
the squared Euclidean distance is then equivalent to maximizing
$\mathbb{E}_{{\rm X}^f{\rm X}^a}[(x^f)^T x^a]$ since
\[
\mathbb{E}_{{\rm X}^f{\rm X}^a}[\|x^f-x^a\|^2] = \mathbb{E}_{{\rm X}^f}[\|x^f\|^2]
+ \mathbb{E}_{{\rm X}^a}[\|x^a\|^2] - 2\mathbb{E}_{{\rm X}^f{\rm
    X}^a}[\langle x^f,x^a\rangle] .
\]
with $\langle x^f,x^a \rangle = (x^f)^T x^a$. Furthermore, the optimal coupling ${\bf T}^\ast$ satisfies 
cyclical monotonicity \citep{sr:Villani2}, which is defined as follows. Let $S$ denote the set of all $(x_i^f,x_j^f) \in
{\cal X} \times {\cal X}$ such that $t_{ij}^\ast>0$, then 
\begin{equation} \label{cyclical}
<x_1^a,x_2^f-x_1^f\rangle + \langle x_2^a,x_3^f-x_2^f\rangle + \cdots
+ \langle x_k^a,x_1^f-x_k^f \rangle \le 0
\end{equation}
for any set of pairs $(x_i^f,x_i^a) \in S$, $i=1,\ldots,k$. Any set $S\subset \mathbb{R}^N \times \mathbb{R}^N$ with
this property is called cyclically monotone \citep{sr:Villani2}. 

We next introduce the Markov chain ${\bf P} \in \mathbb{R}^{M\times M}$ on ${\cal X}$ via
\[
{\bf P}= M\,{\bf T}^\ast
\]
with the property that
\[
p^a = {\bf P} p^f.
\]
Given realizations $x_j^f$, $j=1,\ldots,M$, from the prior PDF, a Monte Carlo resampling step proceeds now as follows: Solve (\ref{OT}) for an optimal coupling
matrix ${\bf T}^\ast$ and define discrete random variables
\begin{equation} \label{resample}
{\rm X}^a_j \sim \left( \begin{array}{c} p_{1j}\\ \vdots \\
    p_{Mj} \end{array} \right)
\end{equation}
for $j=1,\ldots,M$. Here $p_{ij}$ denotes the $(i,j)$th entry of ${\bf
  P}$. Note that the random variables ${\rm X}_j^a$, $j=1,\ldots,M$, are neither independent nor
identically distributed.  A new ensemble of size $M$ is finally obtained by collecting a single realization from each
random variable ${\rm X}_j^a$, \emph{i.e.}
\[
x_j^a := {\rm X}_j^a(\omega)
\]
for $j=1,\ldots,M$. This ensemble of equally
weighted samples allows for the approximation
of expectation values with respect to the posterior distribution
$\pi_{X^a}(x|y_0)$. 

The outlined procedure leads to a particular
instance of resampling with replacement \citep{sr:arul02,sr:kuensch05}. The main difference to
techniques such as monomial or systematic resampling is that the resampling is
chosen such that the expected distance (\ref{OT}) between the prior and posterior
samples is minimized. 

We now propose a further modification which replaces the random resampling step 
and generally avoids obtaining multiple copies in the analyzed ensemble $\{x_i^a\}_{i=1}^M$. The 
modification is based on the observation that
\begin{equation} \label{meanxa}
\bar x_j^a = \mathbb{E}_{{\rm X}_j^a}[x] = \sum_{i=1}^M p_{ij} x_i^f .
\end{equation}
We use this result to propose the deterministic transformation 
\begin{equation} \label{OTtrans}
x_j^a := \bar x_j^a = \sum_{i=1}^M p_{ij} x_i^f
\end{equation}
$j=1,\ldots,M$. The idea is that
\[
\bar g_M^a = \frac{1}{M} \sum_{j=1}^M g(\bar x_j^a)
\]
still provides a consistent estimator for $\mathbb{E}_{X^a}[g]$ as
$M\to \infty$. For the special case $g(x)=x$ it is easy
to verify that indeed
\[
\bar x_M^a= \frac{1}{M} \sum_{j=1}^M x_j^a = \frac{1}{M} \sum_{j=1}^M
\sum_{i=1}^M p_{ij} x_i^f = \sum_{i,j} t_{ij}^\ast x_i^f = \sum_{i=1}^M w_i x_i^f.
\]

Before investigating the theoretical properties of the proposed
transformation (\ref{OTtrans}) we consider two examples which 
indicate that (\ref{OTtrans}) indeed leads to a consistent approximation
to (\ref{cumm}) in the limit $M\to \infty$.

\begin{figure}
\begin{center}
\includegraphics[width=0.5\textwidth]{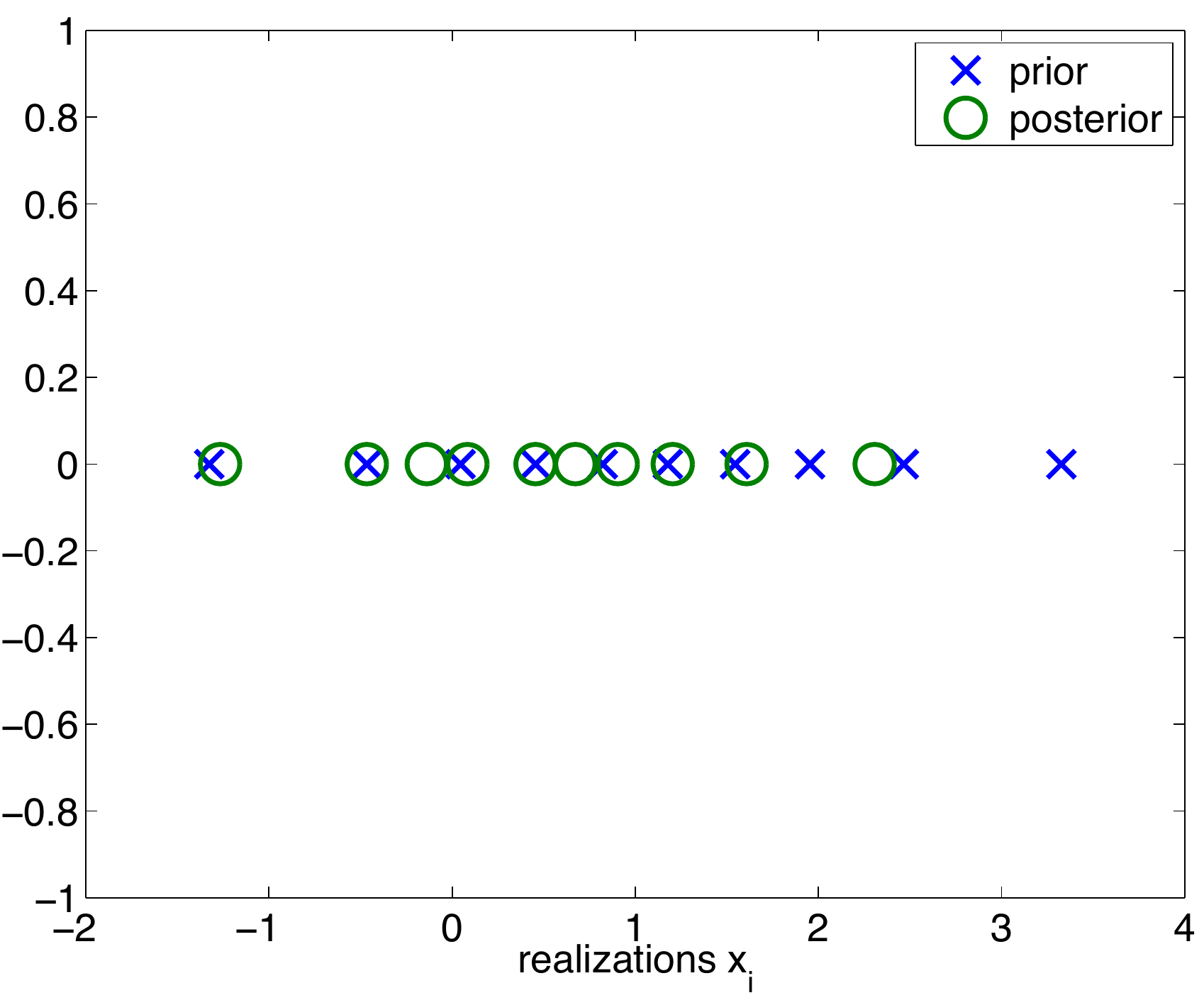}
\end{center}
\caption{Prior $x_i^f$ and posterior $x_i^a$ realizations from the
  transform method for $M=10$.}
\label{figure1}
\end{figure}

\begin{figure}
\begin{center}
\includegraphics[width=0.5\textwidth]{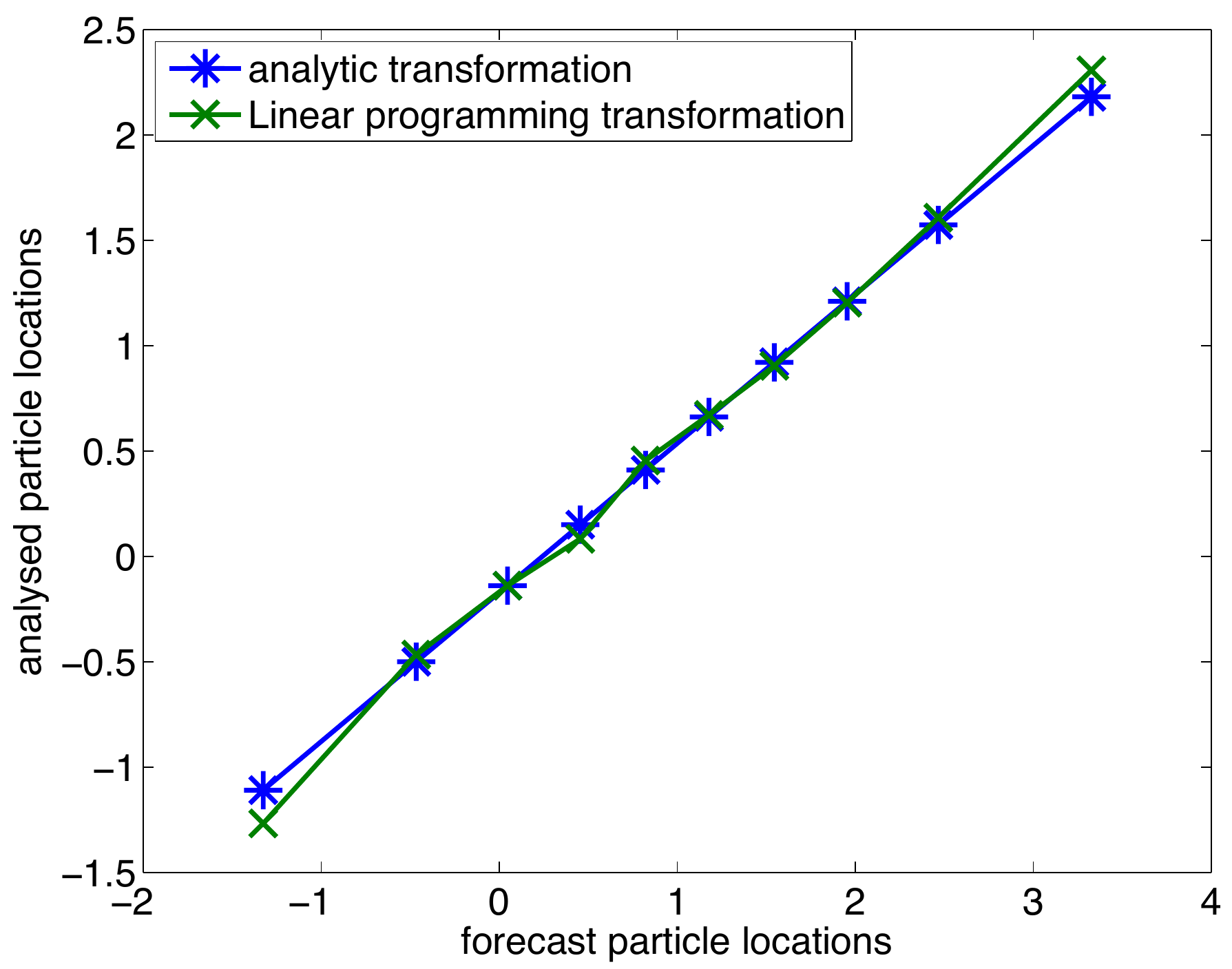}
\end{center}
\caption{Exact and numerical ensemble transform map for $M=10$. The
  Gaussian case leads to the exact transformation being linear. The
  numerical approximation deviates from linearity mostly in its both tails.}
\label{figure1b}
\end{figure}

\begin{figure}
\begin{center}
\includegraphics[width=0.5\textwidth]{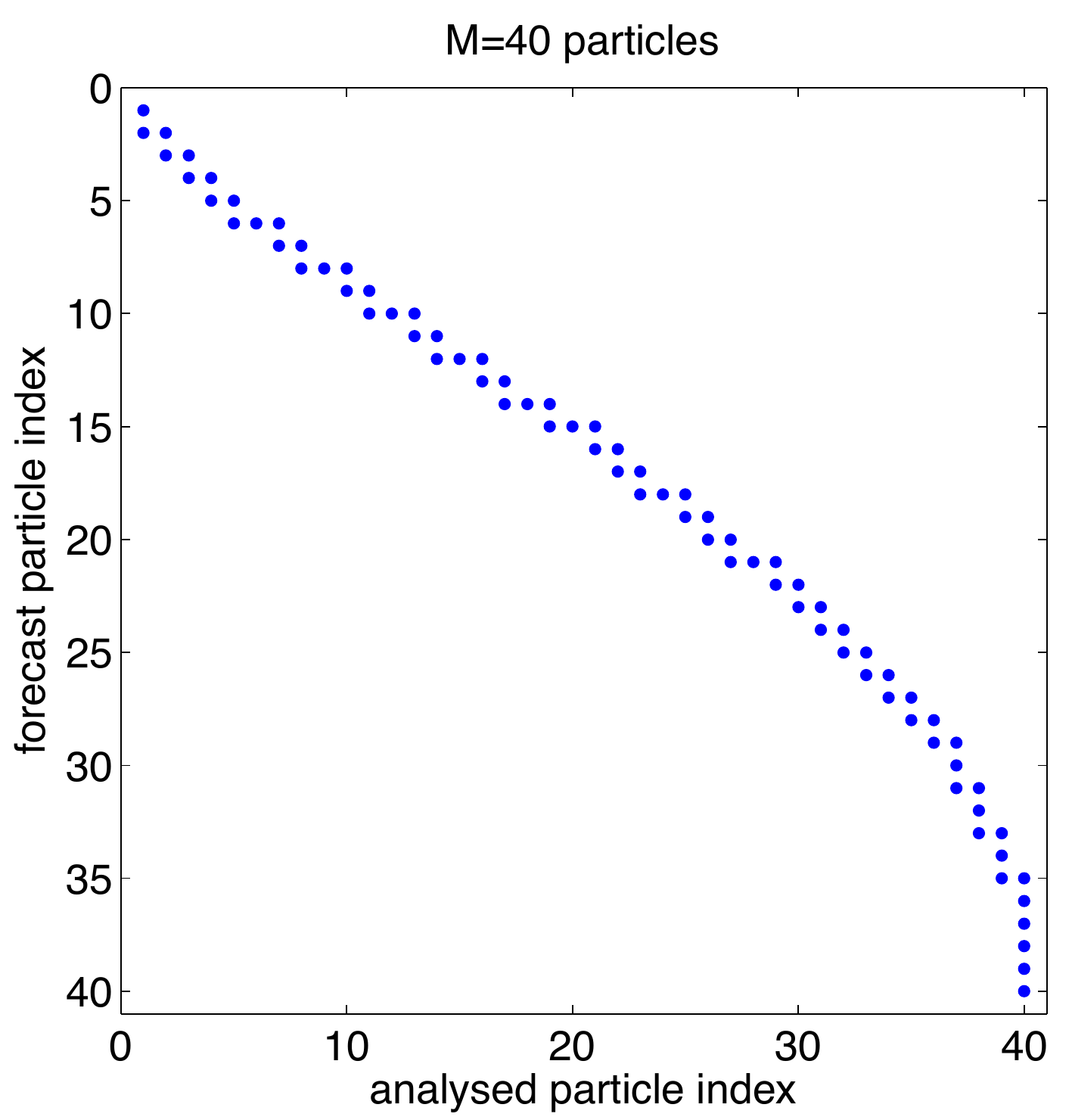}
\end{center}
\caption{Non-zero entries in the matrix ${\bf P}$ for $M=40$, {\it i.e.}~the support of the 
coupling. There are a total of $2M-1 = 79$ non-zero entries. The banded structure
  reveals the spatial locality and the cyclical monotonicity \cite{sr:Villani,sr:Villani2} 
  of the resampling step.}
\label{figure2}
\end{figure}

{\sc Example.} {\it We take the univariate Gaussian with mean $\bar x =1$ and variance $\sigma^2 =
2$ as prior random variable $X^f$. Realizations of $X^f$ are generated using 
\[
x_i^f = \sqrt{2}  \erf^{-1}(2u_i-1), \quad u_i = \frac{1}{2M} +
\frac{i-1}{M}
\]
for $i=1,\ldots,M$. The likelihood function is
\[
\pi_Y(y|x) = \frac{1}{\sqrt{4\pi}} \exp\left( \frac{-(y-x)^2}{4} \right)
\]
with assumed observed value $y_0 = 0.1$. Bayes' formula yields a posterior
distribution which is Gaussian with mean $\bar x = 0.55$ and variance
$\sigma^2 = 1$. The prior and posterior realizations from the
transform method are shown for $M=10$ in Figure \ref{figure1}. We also
display the analytic transform, which is a straight line in case of
Gaussian distributions, and the approximate transform using
linear programming in Figure \ref{figure1b}. The structure of
non-zero entries of the Markov chain matrix ${\bf P}$ for $M=40$ is displayed in
Figure \ref{figure2}, which shows a banded structure of local interactions. The staircase-like arrangement is due to cyclical
monotonicity of the support of ${\bf T}^\ast$. More generally, one obtains the posterior
estimates for the first four moments displayed in Table \ref{table1},
which indicate convergences as $M\to \infty$. }

\begin{table}
\caption{Estimated posterior first to fourth-order moments from the
  ensemble transform method applied to a Gaussian scalar Bayesian
  inference problem.}
\label{table1}
\begin{center}
\begin{tabular}{ccccc}
\hline\noalign{\smallskip}
& $\bar x$ & $\sigma^2$  & $\mathbb{E}[(X-\bar x)^3]$  & $\mathbb{E}[(X-\bar x)^4]$ \\
\noalign{\smallskip}\hline\noalign{\smallskip} 
$M= 10$ & 0.5361 & 1.0898 & -0.0137 & 2.3205 \\
$M= 40$& 0.5473 & 1.0241 & 0.0058 & 2.7954 \\ 
$M = 100$ & 0.5493 & 1.0098 & -0.0037 & 2.9167\\ 
\noalign{\smallskip}\hline
\end{tabular}
\end{center} 
\end{table}


{\sc Example.} {\it As a further (non-Gaussian) example we
consider a uniform prior on the interval $[0,1]$ and use samples
$x_i^f=u_i$ with the $u_i$'s as defined in the previous example. 
Given the observed value $y_0 = 0.1$, the posterior PDF is
\[
\pi_{X^a}(x|0.1) = \left\{ \begin{array}{ll} \frac{1}{0.9427\ldots}
    e^{-(x-0.1)^2/4} & x\in [0,1]\\
0 & \mbox{else} \end{array} \right.
\]
The resulting posterior mean is $\bar x \approx 0.4836$ and its
variance $\sigma^2 \approx 0.0818$. The third and fourth moments are
$0.0016$ and $0.0122$, respectively. The transform method yields the
posterior estimates for the first four moments displayed in Table
\ref{table2}, which again indicate convergences as $M\to \infty$.}

\begin{table}
\caption{Estimated posterior first to fourth-order moments from the
  ensemble transform method applied to a non-Gaussian scalar Bayesian
  inference problem.}
\label{table2}
\begin{center}
\begin{tabular}{ccccc}
\hline\noalign{\smallskip}
& $\bar x$ & $\sigma^2$  & $\mathbb{E}[(X-\bar x)^3]$  & $\mathbb{E}[(X-\bar x)^4]$ \\
\noalign{\smallskip}\hline\noalign{\smallskip} 
$M= 10$ & 0.4838 & 0.0886 & 0.0014 & 0.0114 \\
$M= 40$& 0.4836 & 0.0838 & 0.0016 & 0.0121 \\ 
$M = 100$ & 0.4836 & 0.0825 & 0.0016 & 0.0122\\ 
\noalign{\smallskip}\hline
\end{tabular}
\end{center} 
\end{table}


We now proceed with a theoretical investigation of the transformation (\ref{OTtrans}).
Our convergence result is based on the following lemma and
general results from \cite{sr:mccann95}.

\begin{lemma}  \label{lemma}
The set $\hat S$ consisting of all pairs $(x_j^f,\bar
  x_j^a)$, $j=1,\ldots,M$, with $\bar x_j^a$ defined by (\ref{meanxa}), is cyclically monoton.
\end{lemma}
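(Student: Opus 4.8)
The plan is to derive the cyclical monotonicity of $\hat S$ from that of the support $S$ of the optimal coupling ${\bf T}^\ast$, which is already recorded in (\ref{cyclical}). The starting point is the observation that, by (\ref{meanxa}), each $\bar x_j^a$ is a \emph{convex combination} of the prior samples: $\bar x_j^a=\sum_{i=1}^M p_{ij}x_i^f$ with $p_{ij}\ge 0$, $\sum_{i=1}^M p_{ij}=1$, and $p_{ij}>0$ only when $t_{ij}^\ast>0$, i.e.\ only for those $i$ with $(x_i^f,x_j^f)\in S$. I would then fix a cyclic sequence of indices $j_1,\ldots,j_k$ (with $j_{k+1}:=j_1$) and, unravelling the definition of cyclical monotonicity for the pairs $(x_{j_l}^f,\bar x_{j_l}^a)\in\hat S$, reduce the lemma to the inequality
\[
\sum_{l=1}^k \big\langle \bar x_{j_l}^a,\, x_{j_{l+1}}^f - x_{j_l}^f\big\rangle \le 0 .
\]

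The core of the argument will be a ``selection'' inequality: for \emph{every} choice of indices $i_1,\ldots,i_k$ with $t_{i_l j_l}^\ast>0$ for all $l$,
\[
\sum_{l=1}^k \big\langle x_{i_l}^f,\, x_{j_{l+1}}^f - x_{j_l}^f\big\rangle \le 0 .
\]
This is where the cyclical monotonicity of $S$ enters: the pairs $(x_{i_l}^f,x_{j_l}^f)$ all lie in $S$, and applying (\ref{cyclical}) to the reversed cycle — equivalently, observing that the transpose of ${\bf T}^\ast$ optimally couples the swapped marginals against the same symmetric cost (\ref{dist}), so that the ``flipped'' support $\{(x_j^f,x_i^f):t_{ij}^\ast>0\}$ is itself cyclically monotone — yields precisely the displayed inequality. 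Granting it, I would multiply through by $\prod_{l=1}^k p_{i_l j_l}\ge 0$ and sum over all tuples $(i_1,\ldots,i_k)$; since $\sum_i p_{ij_l}=1$ for each $l$, the sums over the indices $i_{l'}$ with $l'\ne l$ collapse to $1$ and the $l$-th summand reduces to $\langle\bar x_{j_l}^a,x_{j_{l+1}}^f-x_{j_l}^f\rangle$. Summing over $l$ reproduces the required inequality, so $\hat S$ is cyclically monotone.

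An alternative route, closer in spirit to \cite{sr:mccann95}, would be to invoke Rockafellar's theorem: cyclical monotonicity of the flipped support $\{(x_j^f,x_i^f):t_{ij}^\ast>0\}$ means it lies in the graph of the subdifferential $\partial\psi$ of a proper convex function $\psi$, and since $\partial\psi(x_j^f)$ is a convex set containing every $x_i^f$ with $p_{ij}>0$, it also contains their barycenter $\bar x_j^a$; thus $\hat S\subset\partial\psi$, which is cyclically monotone. Either way, I expect the only genuine difficulty to be bookkeeping: one must keep the two marginals — hence the two coordinate slots — straight throughout (the transform (\ref{OTtrans}) exchanges the roles the prior samples play in $S$), and the averaging step must be allowed to range over selections $(i_1,\ldots,i_k)$ with repeated indices, which is harmless since cyclical monotonicity holds for arbitrary finite families of pairs from $S$, repetitions included. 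At bottom the lemma is the statement that the barycentric projection of an optimal coupling between two discrete measures is again cyclically monotone.
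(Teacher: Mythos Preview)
Your proposal is correct and follows essentially the same idea as the paper: both arguments average the cyclical-monotonicity inequality of the support $S$ of ${\bf T}^\ast$ over the admissible selections $i\in I(j)$ to replace each $x_i^f$ by the barycenter $\bar x_j^a$. The only cosmetic differences are that the paper performs the averaging one position at a time while you do it simultaneously via the product weights $\prod_l p_{i_l j_l}$, and that you are more explicit about the coordinate-order bookkeeping (the ``flip''), which the paper leaves implicit; your Rockafellar alternative is a valid second route not pursued in the paper.
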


\begin{proof} Let $I(j)$ denote the set of indices $i$ for which
  $p_{ij}>0$. Since ${\bf T}^\ast$ is cyclically monoton, 
  (\ref{cyclical}) holds for sequences containing a term of type $\langle x_i^f,x_{j+1}^f- x_j^f \rangle$ with $i \in
  I(j)$. By linearity of $\langle x_i^f,x^f_{j+1}-x_j^f\rangle$ in each of its two
  arguments, (\ref{cyclical}) then also applies to linear combinations
  giving rise to
\begin{align*}
 & \sum_{i=1}^M p_{ij} \left\{  \langle x_1^a,x_2^f-x_1^f\rangle  + \langle x_2^a,x_3^f-x_2^f\rangle + 
\cdots +  \langle
x_i^f,x_{j+1}^f - x_j^f \rangle +
\cdots
+ \langle x_k^a,x_1^f-x_k^f \rangle  \right\} =\\
&  \langle x_1^a,x_2^f-x_1^f\rangle  + \langle x_2^a,x_3^f-x_2^f\rangle +  
\cdots + \langle \bar x_j^a,x_{j+1}^f - x_j^f \rangle + \cdots
+ \langle x_k^a,x_1^f-x_k^f \rangle  \le 0 
\end{align*}
since $\sum_{i=1}^M p_{ij} = 1$. The same procedure can be applied
to all indices $j' \in \{1,\ldots,k\} \setminus \{j\}$ resulting in
\[
 \langle \bar x_1^a,x_2^f-x_1^f\rangle  + \langle \bar x_2^a,x_3^f-x_2^f\rangle +  
\cdots + \langle \bar x_j^a,x_{j+1}^f - x_j^f \rangle + \cdots
+ \langle \bar x_k^a,x_1^f-x_k^f \rangle  \le 0 .
\]
Hence the set $\hat S$ is cyclically monoton.
\end{proof}

\begin{theorem}
Assume that the ensemble ${\cal X}_M^f = \{x_i^f\}_{i=1}^M$ consists of
realization from $M$ independent and identically distributed random variables 
$X_i^f:\Omega \to \mathbb{R}^N$ with PDF $\pi_{X^f}$. Define
the set ${\cal X}_M^a = \{\bar x_j^a\}_{j=1}^M$ with the $\bar
x_j^a$'s given by (\ref{meanxa}).
Then the associated maps $\Psi_M:{\cal X}_M^f \to
{\cal X}_M^a$, defined for fixed $M$ by 
\[
\bar x_j^a = \Psi_M(x_j^f), \qquad j=1,\ldots,M,
\]
converge weakly to a map $\Psi:\mathbb{R}^N\to \mathbb{R}^N$ for
$M\to \infty$. Furthermore, the random variable defined by $X^a =
\Psi(X^f)$ has distribution (\ref{Bayes}) and the expected distance between
$X^a$ and $X^f$ is minimized among all such mappings.
\end{theorem}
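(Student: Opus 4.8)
The plan is to realize each $\Psi_M$ as the optimal transportation map between two empirical measures and then to let $M\to\infty$ using the theory of cyclically monotone maps of \citep{sr:mccann95}. Introduce the empirical measures $\mu_M^f=\frac1M\sum_{i=1}^M\delta_{x_i^f}$, $\nu_M^a=\sum_{i=1}^M w_i\,\delta_{x_i^f}$ and $\mu_M^a=\frac1M\sum_{j=1}^M\delta_{\bar x_j^a}$, so that $(\Psi_M)_\#\mu_M^f=\mu_M^a$ by the definition (\ref{OTtrans}) of $\Psi_M$. Since the $x_i^f$ are realizations of i.i.d.\ random variables with law $\pi_{X^f}$, the law of large numbers for empirical measures gives $\mu_M^f\rightharpoonup\pi_{X^f}$ almost surely, and the consistency of self-normalized importance sampling recalled in Section~\ref{sec2} gives $\nu_M^a\rightharpoonup\pi_{X^a}$ almost surely (assuming the prior has a finite second moment, so that the quadratic cost (\ref{dist}) is integrable).

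First I would invoke Lemma~\ref{lemma}: the graph $\hat S=\{(x_j^f,\bar x_j^a)\}_{j=1}^M$ is cyclically monotone, hence lies in the subdifferential of a convex function; consequently the coupling $\frac1M\sum_j\delta_{(x_j^f,\bar x_j^a)}$ is an \emph{optimal} coupling of $\mu_M^f$ and $\mu_M^a$ for the cost (\ref{dist}), and $\Psi_M$ is the corresponding monotone optimal map. Next I would check that $\mu_M^a$ shares the weak limit $\pi_{X^a}$ with $\nu_M^a$. Because $\bar x_j^a=M\sum_i t_{ij}^\ast x_i^f$ is the barycentric projection of the optimal plan ${\bf T}^\ast$ coupling $\mu_M^f$ with $\nu_M^a$, the plan $\sum_{i,j}t_{ij}^\ast\,\delta_{(x_i^f,\bar x_j^a)}$ couples $\nu_M^a$ with $\mu_M^a$ and gives, for the $2$-Wasserstein distance $W_2$,
\[
W_2^2(\mu_M^a,\nu_M^a)\;\le\;\sum_{i,j}t_{ij}^\ast\,\|x_i^f-\bar x_j^a\|^2\;=\;\frac1M\sum_{j=1}^M\mathbb{E}_{{\rm X}_j^a}\!\left[\|x-\bar x_j^a\|^2\right],
\]
i.e.\ the averaged conditional variance of ${\bf T}^\ast$. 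Since $\pi_{X^f}$ and $\pi_{X^a}$ both possess densities, the limiting optimal plan between them is concentrated on the graph of a map, so this averaged conditional variance tends to zero; together with $\nu_M^a\rightharpoonup\pi_{X^a}$ this yields $\mu_M^a\rightharpoonup\pi_{X^a}$.

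At this stage the hypotheses of McCann's theory are in place: $\pi_{X^f}$ is absolutely continuous, $\mu_M^f\rightharpoonup\pi_{X^f}$, $\mu_M^a\rightharpoonup\pi_{X^a}$, and every $\Psi_M$ is cyclically monotone. The stability result of \citep{sr:mccann95} (see also \citep{sr:Villani2}) then forces the graph measures $\frac1M\sum_j\delta_{(x_j^f,\bar x_j^a)}$ to converge weakly to $(\mathrm{id},\Psi)_\#\pi_{X^f}$, where $\Psi=\nabla\varphi$ is the unique cyclically monotone map with $\Psi_\#\pi_{X^f}=\pi_{X^a}$; this is the sense in which $\Psi_M\to\Psi$ weakly. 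Hence $X^a:=\Psi(X^f)$ has the posterior distribution (\ref{Bayes}), and because a cyclically monotone coupling is optimal for the quadratic cost, $(\mathrm{id},\Psi)_\#\pi_{X^f}$ minimizes $\mathbb{E}[\|X^f-X^a\|^2]=W_2^2(\pi_{X^f},\pi_{X^a})$ over all couplings — in particular over all transport maps — of $\pi_{X^f}$ and $\pi_{X^a}$.

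The delicate point is the passage to the limit. One must verify the conditions under which McCann's machinery yields convergence of the maps — tightness of $\{\mu_M^a\}$ and uniform integrability of the quadratic cost, so that optimal plans and their barycentric projections are stable along the sequence — and, above all, establish $\mu_M^a\rightharpoonup\pi_{X^a}$, since $\mu_M^a$ is assembled from the smoothed images $\bar x_j^a$ rather than from genuine posterior samples; the absolute continuity of both $\pi_{X^f}$ and $\pi_{X^a}$, which makes the limiting plan a graph in both directions, is what closes this gap.
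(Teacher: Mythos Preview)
Your approach is essentially the same as the paper's: invoke Lemma~\ref{lemma} to obtain cyclical monotonicity of the graph of $\Psi_M$, then appeal to McCann's compactness/stability results (Theorem~6 and the Main Theorem of \citep{sr:mccann95}) to pass to the limit and identify $\Psi=\nabla\varphi$ as the optimal transport map. The paper's proof is terser and simply asserts that the limiting coupling has marginals $\pi_{X^f}$ and $\pi_{X^a}$; you go further by explicitly distinguishing $\mu_M^a=\frac1M\sum_j\delta_{\bar x_j^a}$ from $\nu_M^a=\sum_i w_i\delta_{x_i^f}$ and arguing, via the barycentric-projection bound on $W_2(\mu_M^a,\nu_M^a)$, that the former inherits the weak limit $\pi_{X^a}$ from the latter. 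That is a genuine refinement --- the paper does not isolate this step --- and your closing paragraph correctly flags the residual technicalities (tightness, uniform integrability, stability of optimal plans) needed to make the conditional-variance argument rigorous.
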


\begin{proof} The maps $\Psi_M$ define a sequence of couplings 
between discrete random variables on ${\cal X}_M^f$ and ${\cal X}_M^a$,
  which satisfy cyclical monotonicity according to Lemma \ref{lemma}.
We may now follow the proof of Theorem 6 of \cite{sr:mccann95} to conclude that 
these couplings converge weakly to a coupling, \emph{i.e.}~a
probability measure $\mu_{X^fX^a}$ on $\mathbb{R}^N\times \mathbb{R}^N$ with marginals
$\pi_{X^f}$ and $\pi_{X^a}$, respectively. Furthermore, $\mu_{X^f X^a}$ is also
cyclically monoton and the Main Theorem of \cite{sr:mccann95} can be
applied to guarantee the existence of the map $\Psi$, which itself is
the gradient of a convex potential $\psi$. The coupling $\mu_{X^f X^a}$
solves the Monge-Kantorovitch problem with cost $c(x,y) = \|x-y\|^2$ 
\citep{sr:Villani,sr:Villani2}.
\end{proof}

One may replace the uniform probabilities in $p^f$ by an appropriate
random vector $p^f = (w_1^f,\ldots,w_M^f)^T$, i.e.~$w_i^f\ge 0$ and
$\sum_{i=1}^M w_i^f = 1$. To clarify the notations we write $p^a =
(w_1^a,\ldots,w_M^a)^T$ for the posterior weights according to
Bayes' formula. The linear programming problem (\ref{OT}) is 
adjusted accordingly and one obtains an optimal coupling ${\bf
  T}^\ast$ and an induced Markov chain ${\bf P}$ with entries
\[
p_{ij} = \frac{t_{ij}}{w_j^f}.
\]
Hence the transform method (\ref{OT}) is now replaced by
\begin{equation} \label{transform1}
\bar x_j^a = \sum_{i=1}^M p_{ij} x_i^f
\end{equation} 
and the posterior ensemble mean satisfies
\[
\bar x^a_M = \sum_{j=1}^M w_j^f \bar x_j^a = \sum_{j=1}^M \sum_{i=1}^M
w_j^f \frac{t_{ij}}{w_j^f} x_i^f = \sum_{i=1}^M w_i^a x_i^f 
\]
as desired. More generally, posterior expectation values are given by
\[
\bar g_M^a = \sum_{i=1}^M w_i^f g(x_i^a).
\]


\section{Application to sequential data assimilation} \label{sec4}

We now apply the proposed ensemble transformation (ET) method (\ref{OT}) to sequential state
estimation for ordinary differential equation models
\begin{equation} \label{ode}
\dot{x} = f(x)
\end{equation}
with known PDF $\pi_0$ for the initial conditions $x(0)\in \mathbb{R}^N$ at time
$t=0$. Hence we treat solutions $x(t)$ as realizations of the random variables
$X_t$, $t\ge 0$, determined by the flow of (\ref{ode}) and the initial PDF $\pi_0$.

We assume the availability of observations $y(t_k) \in \mathbb{R}^K$ at discrete
times $t_k = k\Delta t_{\rm obs}$, $k>0$, in intervals of $\Delta t_{\rm obs}>0$.
The observations satisfy the forward model
\[
Y(t_k) = h(x_{\rm ref}(t_k)) + \Xi_k,
\]
where $\Xi_k:\Omega \to \mathbb{R}^K$ represent independent and identically distributed
centered Gaussian random variables with covariance matrix
$R \in \mathbb{R}^{K\times K}$, $h:\mathbb{R}^N \to \mathbb{R}^K$
is the forward map, and $x_{\rm ref}(t) \in \mathbb{R}^N$ denotes the desired
reference solution. The forward model gives rise to the likelihood
\[
\pi_Y(y|x) = \frac{1}{(2\pi)^{K/2}|R|^{1/2}} \exp\left(-\frac{1}{2}
(y-h(x))^T R^{-1} (y-h(x)) \right).
\]

A particle filter starts from an ensemble $\{x_i(0)\}_{i=1}^M$ of $M$ realizations
from the initial PDF $\pi_0$. We evolve this ensemble of realizations
under the model dynamics (\ref{ode}) till the first observation
$y_{\rm obs}(\Delta t_{\rm obs})$ becomes available at which point we apply the proposed
ET method to the forecast ensemble members $x_i^f = x_i(\Delta
t_{\rm obs})$. If one furthermore collects these prior realizations into an $N\times M$ matrix
\[
{\bf X}^f = [x_1^f \cdots x_M^f],
\]
then, for given observation $y_0 = y(\Delta t_{\rm obs})$, the
ET method (\ref{transform1}) leads to the posterior realizations simply given by
\begin{equation} \label{transform2}
{\bf X}^a = {\bf X}^f {\bf P}, \qquad [x_1^a \cdots x_M^a ] = {\bf X}^a,
\end{equation}
where ${\bf P}$ is the Markov chain induces by the associated linear
programming problem. The analysed ensemble members $x_i^a$, $i=1,\ldots,M$, are now being used as new
initial conditions for the model (\ref{ode}) and the process of
alternating between propagation under model dynamics and assimilation of data is repeated
for all $k>1$.

It should be noted that a transformation similar to (\ref{transform2}) arises
from the ensemble square root filter (ESRF)
\citep{sr:evensen}. However, the transform matrix ${\bf P} \in
\mathbb{R}^{M\times M}$ used here
is obtained in a completely different manner and does not relly on the
assumption of the PDFs being Gaussian. We mention the work of
\cite{sr:lei11} for an alternative approach to modify EnKFs in order to make
them consistent with non-Gaussian distributions.
We now provide a numerical example and compare an ESRF implementation
with a particle filter using the new ET method.

\begin{figure}
\begin{center}
\includegraphics[width=0.5\textwidth]{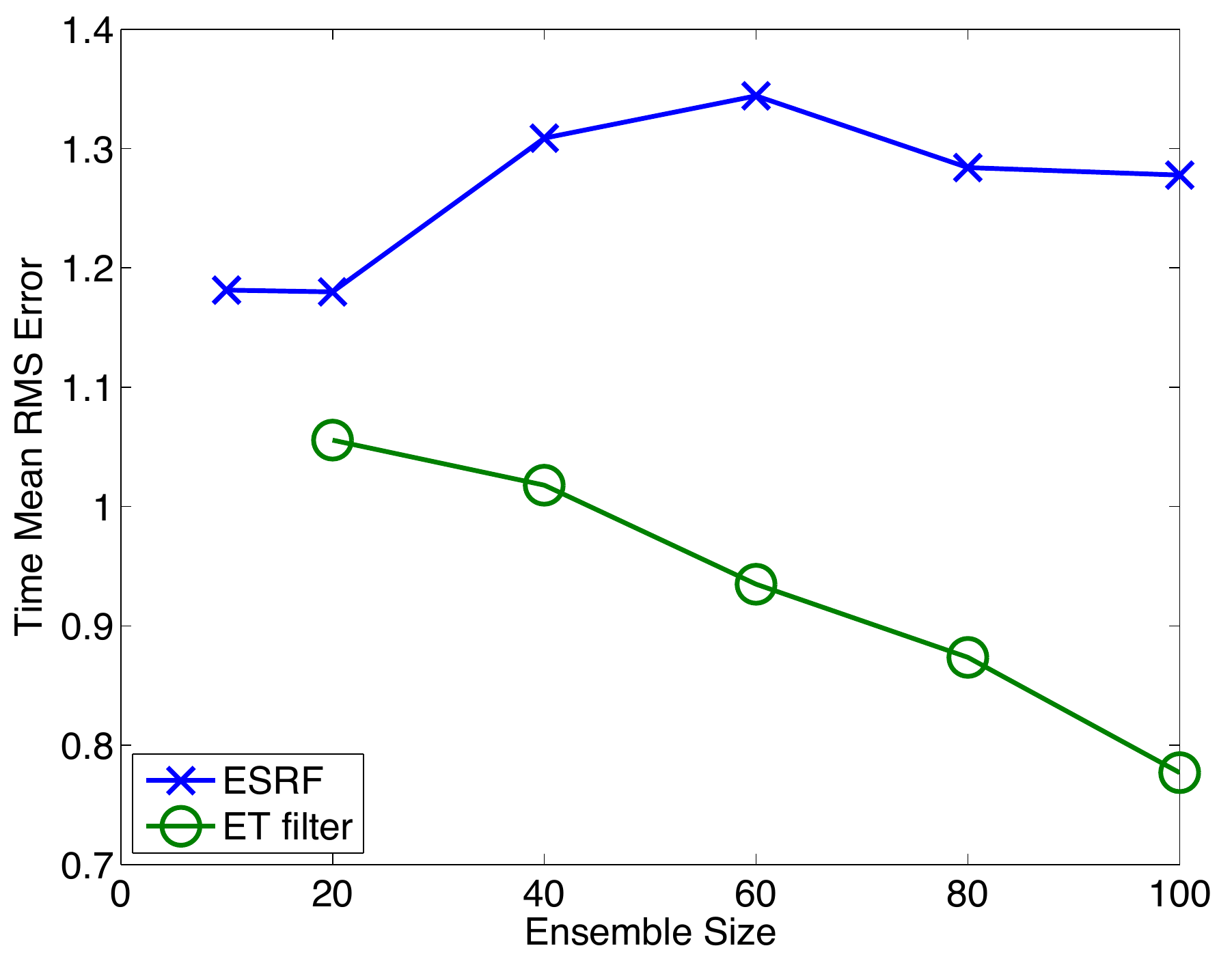}
\end{center}
\caption{Time averaged RMS errors for the Lorenz-63 model in the setting
  of \cite{sr:anderson10} for an ensemble square root
filter (ESRF) and the new ensemble transform (ET) filter for increasing ensemble sizes $M$.}
\label{figure3}
\end{figure}

{\sc Example.} {\it We consider the Lorenz-63 model \cite{sr:lorenz63}
\begin{align*}
\dot{\rm x} &= \sigma ({\rm y}-{\rm x}),\\
\dot{\rm y} &= {\rm x}(\rho-{\rm z}) - {\rm y},\\
\dot{\rm z}& = {\rm xy} - \beta {\rm z}
\end{align*}
in the parameter and data assimilation setting of \cite{sr:anderson10}. In particular,
the state vector is $x = ({\rm x},{\rm y},{\rm z})^T \in \mathbb{R}^3$
and we observe all three variables every $\Delta t_{\rm obs} = 0.12$ time units with a measurement error 
variance $R=8$ in each observed solution component. The equations are
integrated in time by the implicit midpoint rule with step-size
$\Delta t = 0.01$. We implement an ESRF \cite{sr:evensen} and the new 
ET filter for ensemble sizes $M=10,20,40,60,80,100$. The results for
both methods use an optimized form of ensemble 
inflation \citep{sr:evensen}. The ET nevertheless leads to filter
divergence for $M=10$ while the ESRF is stable for all given choices of $M$. 
The time averaged root mean square (RMS) errors over 2000 assimilation
steps can be found in Fig.~\ref{figure3}. It is evident that the new ET filter
leads to much lower RMS errors for all $M\ge 20$. The results also
compare favorable to the ones displayed in \cite{sr:anderson10}
for the rank histogram filter (RHF) \cite{sr:anderson10} and the EnKF
with perturbed observations \citep{sr:evensen}.}


\section{Conclusions}

We have explored the application of linear programming and optimal
transportation to Bayesian inference and particle filters. We have
demonstrated theoretically as well as numerically that the
proposed ET method allows to reproduce posterior expectation values in
the limit $M\to \infty$ and a convergence to the associated continuum
optimal transport problem \citep{sr:Villani,sr:Villani2}. 
The application of continuous optimal transportation to Bayesian inference
has been discussed by \cite{sr:marzouk11},
\cite{sr:reich10,sr:reich11}, \cite{sr:cotterreich}. However, a direct
application of continuous optimal transportation to Bayesian inference
in high-dimensional state spaces $\mathbb{R}^N$ seems currently out of
reach and efficient numerically techniques need to be developed. 
It remains to investigate what modifications are required (such
as localization \citep{sr:evensen}) in
order to implement the proposed ET method even if the ensemble sizes $M$ are much smaller than the
dimension of state space $N$ (or the dimension of the attractor of
(\ref{ode}) in case of intermittent data assimilation). A standard Matlab implementation of the simplex algorithm
was used for solving the linear programming problems in this paper. More efficient algorithms such as the 
auction algorithm \citep{sr:bertsekas89} should be considered in future implementations of the ET method (\ref{OT}). 

\section*{Acknowledgments}
I would like to thank Yann Brenier and Jacques Vanneste for
inspiring discussions on the subject of this paper during an
Oberwolfach workshop in 2010. The paper benefited furthermore from
additional discussions with Wilhelm Stannat and Dan Crisan at another
Oberwolfach workshop in 2012.


\bibliographystyle{plainnat}
\bibliography{survey}

\end{document}